\newtheorem{theorem}{Theorem}[section]
\newtheorem{lemma}[theorem]{Lemma}
\newtheorem{corollary}[theorem]{Corollary}
\newtheorem{proposition}[theorem]{Proposition}
\newtheorem{remark}[theorem]{Remark}
\def\C{\mathbb C}
\def\R{\mathbb R}
\begin{document}

\title[Division subspaces and integrable kernels ]{Division subspaces and integrable kernels}

\author%[authorlabel1]
{Alexander I. Bufetov}
\address%[authorlabel1]
{Alexander I. BUFETOV: 
Aix-Marseille Universit\'e, Centrale Marseille, CNRS, Institut de Math\'ematiques de Marseille, UMR7373, 39 Rue F. Joliot Curie 13453, Marseille, France;
Steklov Institute of Mathematics, Moscow, Russia}
\email{bufetov@mi.ras.ru, alexander.bufetov@univ-amu.fr}

\author{Roman V. Romanov}
\address{Roman V. ROMANOV: Department of Physics, Saint-Petersburg State University, Saint-Petersburg, Russia; ITMO University, Saint-Petersburg, Russia}
\email{morovom@gmail.com}

\begin{abstract}
In this note we prove that the reproducing kernel of a Hilbert space satisfying the division property
has integrable form, is locally of trace class, and the Hilbert space itself is a Hilbert space of holomorphic functions. 
\end{abstract} 

\subjclass[2010]{60G55, 46E22, 47B32}
\keywords{ Reproducing-kernel Hilbert spaces,  determinantal point process, integrable kernels}
\maketitle

\section{Introduction}

In applications one often encounters one-dimensional determinantal point processes governed by  
kernels having integrable form 
\begin{equation}\label{integ-rep}
K(x,y)=\displaystyle \frac{A(x)B(y) - B(x)A(y)}{x-y}.
\end{equation}
These are, for example, the sine-kernel \cite{Dyson}, Bessel kernel \cite{TW-Bessel}, Airy kernel \cite{TW-Airy}, gamma kernel \cite{BO-gamma, GO-Adv}, discrete sine-kernel \cite{BOO}, and discrete Bessel kernel \cite{BOO,johansson}. 
If an integrable kernel $K$ 
induces the operator of orthogonal projection onto a closed subspace $H\subset L^2(\R )$, then 
it is shown in \cite{Buf-gibbs} that 
the subspace $L$ has the following {\it division property}: for $p\in {\mathbb R}$, $f\in H$, if $f(p)=0$, then
\begin{equation}\label{weakdiv}
\displaystyle \frac{f(x)}{x-p}\in H.
\end{equation} 
The division property (\ref{weakdiv})  lies at the centre of the proof in \cite{Buf-gibbs} 
of the quasi-invariance of the corresponding  processes under the group of compactly 
supported diffeomorphisms of $\R$, an analogue of the Gibbs property, cf. \cite{sinai}, for determinantal point processes.  Our first result, Theorem \ref{integrable}, establishes that 
the reproducing kernel of a  real subspace with the division property (\ref{weakdiv}) admits an integrable representation (\ref{integ-rep}).

The division property  (\ref{weakdiv}) naturally arises in the theory of Hilbert spaces of entire functions called the de Branges spaces \cite{deBr}; determinantal point processes corresponding to de Branges spaces are studied in \cite{Buf-Shirai}.  A stronger division property  of a de Branges space $ \mathcal H $  requires, for $ f \in \mathcal H $,  $k\in\R$, that the function $ \frac{ f ( \cdot ) - f ( k ) }{ (\cdot ) - k } $ also belongs to the space $ \mathcal H $. The strong division property  characterizes the important class of regular de Branges spaces.  Recall the following characterization of de Branges spaces:  for any de Branges space $ \mathcal H $ there exists a Hermite--Biehler function $ E $ such that $ \mathcal H $ coincides with the set of entire functions $ f $ such that $ f/E $ and $ f^* / E $ belong to the Hardy class $ H^2 ( \C_+ ) $, and the identity $ \| f \|_{ \mathcal H } = \| f/ E \|_{ L^2 ( \R ) } $ holds for all $ f \in \mathcal H $. It is then natural to search for a description of the reproducing kernel Hilbert subspaces in $ L^2 ( \R , \mu ) $ subject to axiomatic conditions from \cite{Buf-gibbs}  that ensure quasi-invariance of the corresponding determinantal processes in terms of functional parameter(s) in the spirit of  the de Branges theorem just mentioned. In the case when $ \mu $ is the Lebesgue measure on $ \R $ such a characterization  immediately follows from the elementary theory of the shift operator, see the example in Section \ref{dca}.  

We next study the analytic properties of reproducing kernel Hilbert subspaces in $ L^2(\R , \mu )$ satisfying either of the division properties above. In Theorem \ref{strdiv-anal} we establish that if the strong division property  holds then all the functions in the space $H$ are meromorphic on a natural domain with all poles contained in a discrete set depending on the space $ H $ only.  
A standard example is the Paley--Wiener space, the range of the sine-kernel. 
The strong division property fails, however, in many important examples such as the Bessel and Airy kernels. In this situation one can at best expect  that the ratios of elements of a space with the division property be meromorphic on a natural domain in the sense of 
Theorem \ref{strdiv-anal}. In Theorem \ref{weakdiv-anal}  we establish exactly that. 
In Corollary \ref{loctraceclass} we show that the weak division property implies that the kernel is locally of  trace class. 

A brief outline of our argument is as follows.  The division properties can be seen as conditions for boundedness of the corresponding operators in $ H $. The functions from $ H $ can then be expressed as matrix elements of the resolvent of the division operators, and the desired analyticity follows from the analyticity of the resolvent.  The proof of the integrability of the kernel localizes the argument from the original derivation \cite[Theorem 23]{deBr} of the axiomatic characterization of  de Branges spaces; 
see  also \cite{Aleman} for related calculations in a different context. 

We fix some notation.
Throughout the paper $\mu$ is a sigma-finite measure on $ \R$ and $ U \subset \R $ is a Borel subset of $\R$ satisfying  $ \mu (\R\setminus U)=0$.  The symbol $ \mathrm{supp} \mu$ stands  for the smallest closed support of $\mu$.  A linear set $ H $ of Borel functions $ f $ on $ U $ satisfying $\int |f|^2 d\mu < \infty $ is called a reproducing kernel subspace in $ L^2 ( \R , \mu ) $ if the set is closed in the $ L^2 ( \R ,  \mu ) $ norm, and the functional $ f \mapsto f ( k ) $ is bounded in the $ L^2 ( \R , \mu )$ norm for all $ k \in U $. The reproducing kernel of the space $ H $ at a point $ w \in U $ is denoted either by $ K_w $ or $ K ( \cdot , w ) $, so $ f(w) = \langle f , K_w \rangle $. The essential spectrum, $ \sigma_{ ess } ( D ) $, of a closed operator $ D $ is the union of all non-isolated points in the spectrum 
of $ D $ and isolated points whose corresponding invariant subspaces have  infinite dimension.
 
{\bf{Acknowledgements.}} 
We are deeply grateful to Alexei Klimenko  and Yanqi Qiu for useful discussions and comments and to the unknown referee for helpful suggestions. The research of A. Bufetov on this project has received funding from the European Research Council (ERC) under the European Union's Horizon 2020 research and innovation programme under grant agreement No 647133 (ICHAOS). A. Bufetov has also been funded by RFBR grant 18-31-20031 and the Grant MD 5991.2016.1 of the President of the Russian Federation, by  the Russian Academic Excellence Project `5-100' and by the Chaire Gabriel Lam\'e at the Chebyshev Laboratory of the SPbSU, a joint initiative of the French Embassy in the Russian Federation and the Saint-Petersburg State
University.  The work of R. Romanov was supported by the Russian Science Foundation Grant 17-11-01064 (Theorems 3.1 and 3.5). R. Romanov gratefully acknowledges the hospitality of CNRS Institut de Math{\'e}matiques de Marseille.

\section{The weak division property and integrability}

\begin{theorem} \label{integrable}
Let $ H $ be a reproducing kernel  subspace in $ L^2 ( \R , \mu ) $, and assume  that for any $ k \in U $ and $ f \in H $ satisfying  $ f ( k ) = 0 $ there exists a unique function $ g \in H $ such that   
$ f ( x ) =  ( x - k ) g ( x )$
for all $ x \in U $. Then there exist functions $A$, $B$ defined on $U$ such that for all $x,y\in U$, $ x \ne y $, the reproducing kernel $ K $ of the space $ H $ admits the integrable representation 
\begin{equation} \label{integ}
K(x,y)=\displaystyle \frac{A(x)\overline{B(y)} - B(x)\overline{A(y)}}{x-y}.
\end{equation}
\end{theorem}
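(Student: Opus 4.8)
The strategy is to localise the argument of \cite[Theorem~23]{deBr}.

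First I would record the operator–theoretic content of the hypothesis. For $k\in U$ with $K(k,k)\neq 0$ and any $f\in H$ the function $f-\frac{f(k)}{K(k,k)}K_k$ vanishes at $k$, so the hypothesis produces a \emph{unique} $T_kf\in H$ with $(x-k)(T_kf)(x)=f(x)-\frac{f(k)}{K(k,k)}K(x,k)$ for all $x\in U$; when $K(k,k)=0$ one has $K_k=0$, every element of $H$ vanishes at $k$, and $T_kf:=f/(x-k)$ lies in $H$ directly. That $T_k$ is bounded follows from the closed graph theorem: $L^2(\R,\mu)$–convergence in $H$ forces pointwise convergence because $H$ is a reproducing kernel space, so the displayed identity shows the graph of $T_k$ is closed. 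A brief computation with the reproducing property yields $T_k^{*}K_w=\frac{1}{w-k}\bigl(K_w-\frac{K(k,w)}{K(k,k)}K_k\bigr)=\frac{1}{w-k}(I-P_k)K_w$, where $P_k$ is the projection onto $K_k$. It is convenient to pass to the operator $N$ of multiplication by the independent variable, with domain $\Dom N=\{g\in H:\ xg\in H\}$: it is symmetric on $H$, one has $\Dom N=\operatorname{range}T_k$ for every admissible $k$ (so the division property is exactly what makes $\Dom N$ large), and $K_w\in\Dom N^{*}$ with $N^{*}K_w=wK_w$ for every $w\in U$. Hence the boundary form $[\varphi,\psi]:=\langle N^{*}\varphi,\psi\rangle-\langle\varphi,N^{*}\psi\rangle$ satisfies $[K_w,K_z]=(w-z)K(z,w)$ for $w,z\in U$.

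The heart of the matter is then the claim: for every $y\in U$ the function $x\mapsto(x-y)K(x,y)$ lies in a fixed two–dimensional space. Equivalently, for any three points $y_1,y_2,y_3\in U$ there is a non-trivial combination $\psi=\sum_j e_jK_{y_j}$ with $\psi\in\Dom N$; indeed, writing $\bar N$ for the closure of $N$ (so $\bar N\subseteq N^{*}$ and $\bar Ng=xg$), such a $\psi$ satisfies $x\psi=N^{*}\psi=\sum_j e_jy_jK_{y_j}$, whence $\sum_j e_j(x-y_j)K(x,y_j)=x\psi-\sum_j e_jy_jK_{y_j}\equiv 0$. In yet other words, the boundary form $[\,\cdot\,,\cdot\,]$ has rank at most two on the span of the reproducing kernels. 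I expect this to be the main obstacle. It is here that the computation of \cite[Theorem~23]{deBr} is localised, the isometric division axiom used there being replaced by the boundedness of the $T_k$ together with the two identities $(x-k)(T_kf)(x)=f(x)-\frac{f(k)}{K(k,k)}K(x,k)$ and $T_k^{*}K_w=\frac{1}{w-k}(I-P_k)K_w$, combined at two well-chosen base points. And it is here that the hypothesis is used essentially: without it the conclusion is false, as one sees already for the projection onto the subspace of $L^2(\R)$ of functions whose Fourier transform is supported on a disconnected set, for which $(x-y)K(x,y)$ has rank greater than two and the division property fails.

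Granting the claim, I would finish as follows. Excluding the trivial cases $H=\{0\}$ and $\dim H=1$ (where $K(x,y)=f_0(x)\overline{f_0(y)}/\|f_0\|^2$ and one takes $A(x)=xf_0(x)/\|f_0\|^2$, $B(x)=f_0(x)$), fix $\alpha,\beta\in U$ with $K_\alpha,K_\beta$ linearly independent and put $P(x)=(x-\alpha)K(x,\alpha)$, $Q(x)=(x-\beta)K(x,\beta)$; by the claim applied at $y=\alpha$ and $y=\beta$ the functions $P,Q$ span the two–dimensional space above, so there is a constant $2\times2$ matrix $C$ with $(x-y)K(x,y)=\bigl(P(x)\ Q(x)\bigr)\,C\,\overline{\bigl(P(y)\ Q(y)\bigr)}^{\,T}$ for all $x,y\in U$ (the representation is unique when $P,Q$ are independent; if instead the span is one–dimensional the kernel has rank one and is handled in the same way). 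The symmetry $\overline{K(y,x)}=K(x,y)$ forces $C^{*}=-C$; and since $(x-y)K(x,y)$ vanishes on the diagonal, for any $x_1,x_2$ with $(P(x_i),Q(x_i))$ independent the matrix $L=[(x_i-x_j)K(x_i,x_j)]_{i,j}$ has $iL$ Hermitian with zero trace, so $iL$ — and hence $iC$, which is congruent to it — has signature $(1,1)$ or $0$. A skew–Hermitian $2\times2$ matrix with this property factors as $C=vu^{*}-uv^{*}$ for suitable $u,v\in\C^{2}$, and with $A(x)=\bigl(P(x)\ Q(x)\bigr)v$, $B(x)=\bigl(P(x)\ Q(x)\bigr)u$ — explicit linear combinations of $P$ and $Q$ — substitution gives $(x-y)K(x,y)=A(x)\overline{B(y)}-B(x)\overline{A(y)}$, which is \eqref{integ}. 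Finally the degenerate points cause no trouble: if $K(w,w)=0$ then $K_w=0$, hence $K(x,w)=0$ for all $x$, so $P(w)=Q(w)=0$ and both sides of \eqref{integ} vanish whenever $x$ or $y$ equals $w$.
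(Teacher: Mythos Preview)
Your framework is sound and the finishing argument (from the rank--two claim to \eqref{integ} via the skew--Hermitian factorisation of $C$) is correct, if somewhat more elaborate than needed.  The problem is that the rank--two claim itself --- that the functions $x\mapsto(x-y)K(x,y)$ span at most a two--dimensional space --- is the entire content of the theorem, and you have not proved it: you describe the ingredients you would use and write ``I expect this to be the main obstacle'' and ``Granting the claim'', but no computation is carried out.

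The paper fills precisely this gap, and does so with a short direct calculation at a \emph{single} base point $p$.  One sets
\[
(t-p)\,\phi_x(t)=K_p(p)K_x(t)-K_x(p)K_p(t),
\]
which is $K_p(p)\,T_pK_x$ in your notation, and then simply expands the trivial identity
\[
\langle (t-p)\phi_x,\phi_y\rangle=\langle\phi_x,(t-p)\phi_y\rangle
\]
using the reproducing property.  After multiplying through by $(x-p)(y-p)/K_p(p)$ one reads off \eqref{integ} directly with the explicit choice
\[
A(x)=(x-p)K(x,p),\qquad B(x)=K(x,p)-(x-p)\,\frac{\overline{\phi_x(p)}}{K_p(p)}.
\]
No second base point is required, and neither the closed graph theorem nor the operators $N$, $N^{*}$ or the boundary form enter: the only operator--theoretic fact used is that multiplication by the real function $t-p$ is symmetric on $L^2(\R,\mu)$.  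Your machinery would recover the same conclusion, but the missing step is exactly the one the paper supplies in three lines.
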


\begin{remark} The theorem holds both in the continuous and the discrete setting: for example, the measure  $\mu$ can be the counting measure on $\mathbb Z$. 
\end{remark}
\begin{remark}
A reproducing kernel subspace $H $ is called real if for any $ f \in H $ the function $ f^*( k ) \colon = \overline{ f ( k ) } $ belongs to $ H $.
If the space $ H $ is real then the functions $ A $ and $ B $ in (\ref{integ}) can be chosen real.  
\end{remark}
\begin{remark}
The term  integrable comes from the connection with the theory of integrable systems discovered by Its, Izergin, Korepin and Slavnov in \cite{IIKS}.
\end{remark}
\begin{proof} 
Fix an arbitrary point $p\in U$ such that $ K_p \not\equiv 0 $. To a point $x\in U$  assign a function $\phi_x$ by the formula 
\begin{equation}\label{def-phix}
( t-p) \phi_x(t)= K_x(t)K_p(p)-K_x(p)K_p(t) . 
\end{equation}
We have $ ( t - p ) \phi_x \in H $ for any $ x \in U $. 
By the division property, we have $ \phi_x \in H $, and the relation  
\begin{equation}\label{self-adj1}
\langle (t-p)\phi_x, \phi_y\rangle=\langle \phi_x ,  (t-p)\phi_y \rangle
\end{equation}
holds for all $ x , y \in U $.
Expanding the inner products in (\ref{self-adj1})  and using the reproducing kernel property we find 
\[ %\begin{equation}%\label{self-adj2}
K(p,p)\overline{ \phi_y(x)} - K_x (p)\overline{\phi_y(p)} = K(p,p)\phi_x(y)- \overline{K_y (p)} \phi_x(p)
\] %\end{equation}
Substituting the definition (\ref{def-phix}) and multiplying by $(K(p,p))^{-1}(x-p)(y-p)$, we obtain
\begin{multline}\label{kpxy}
(x-p)[K(p,p)K_x ( y )  - K_x(p) K_p (y) ]- (x-p)(y-p)\overline{K_y (p)} \frac{\phi_x (p)}{K_p(p)}=\\=
(y-p)[K(p,p)\overline{K_y(x)} - \overline{ K_y (p)} \overline{K_p(x)} ]- (x-p)(y-p)K_x (p)\frac{\overline{ \phi_y (p)}}{ K_p (p ) }.
\end{multline}
Set 
\begin{equation}\label{AB}
A_p(x)=(x-p)K(x,p), B_p(x)= K(x,p)-(x-p)\frac{\overline{\phi_x(p)}}{K_p ( p ) }. 
\end{equation}

From (\ref{kpxy}) we then have 
$$
K(x,y)=\displaystyle \frac1{K(p,p)}\displaystyle \frac{A_p(x)\overline{B_p(y)} -\overline{A_p(y)} B_p(x)}{x-y} ,
$$
and the proof is complete.
\end{proof}

Note that a pair of functions $ A $ and $ B $ for which (\ref{integ}) holds is recovered explicitly in terms of the reproducing kernel by  (\ref{AB}).  This pair is non-unique, however the recovery procedure is stable in the following sense. 
%Recall that the space $  L^2_{\mathrm{loc}} ( \R , \mu ) $ of locally square-integrable functions is by definition the space of functions square-integrable in restriction to any compact subset of $\mathbb R$, while convergence in 
%$  L^2_{\mathrm{loc}} ( \R , \mu ) $ precisely means $L^2$-convergence in restriction to any compact subset of $\mathbb R$.

\begin{proposition}
Let $ H_n $, $ n > 0 $, be a sequence of real reproducing kernel subspaces in  $ L^2 ( \R , \mu ) $ having the same set $ U $ and satisfying the condition of Theorem \ref{integrable} for each $ n $, $ K^n $ be the corresponding reproducing kernels. Assume that for each $ w \in U $ the functions $ K^n_ w $ converge  to a non-zero limit, $K_w$, in  $ L^2 ( \R , \mu ) $ as $ n \to \infty $. Then there exist sequences $ A_n $, $ B_n $ of functions on $ U $ such that 
\begin{enumerate} 
\item for any $ n > 0 $ \[ K^n (x,y)=\displaystyle \frac{A_n (x)\overline{B_n (y)} - B_n (x)\overline{A_n(y)}}{x-y}; \]
 
\item $ A_n $ and $ B_n $ belong to the space $  X = L^2 ( \R , M) $, $ dM ( x ) = \left( 1 + x^2 \right)^{ -1 } \mathrm{d}\mu ( x )  $, and converge in $ X $ as well as pointwise.
% to limit functions $A$ and $B$;
%\item the kernel $K$ and the functions $A$, $B$ satisfy (\ref{integ}).
 \end{enumerate}
\end{proposition}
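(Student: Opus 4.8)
The plan is to exploit the non-uniqueness of the pair $(A,B)$ recorded just before the statement and, instead of the explicit recovery (\ref{AB}), to produce a pair $(A_n,B_n)$ assembled directly from the values of $K^n$ at two fixed reference points, for which convergence becomes a soft matter. First I would fix $p,q\in U$ with $p\neq q$ and $\langle K_q,K_p\rangle\neq 0$ (writing $K(x,y):=\langle K_y,K_x\rangle=\lim_nK^n(x,y)$); such a pair exists, for otherwise the nonzero vectors $K_w$, $w\in U$, would be pairwise orthogonal in $L^2(\R,\mu)$, which is impossible when $U$ is uncountable, and which in the discrete case is excluded by a separate elementary argument using the division property of the $H_n$. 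Since $K^n(p,p)=\|K^n_p\|^2\to\|K_p\|^2>0$ and $K^n(q,p)=\langle K^n_q,K^n_p\rangle\to K(q,p)\neq 0$, after discarding finitely many terms I may assume $K^n(p,p)\neq0$ and $K^n(q,p)\neq 0$ for all $n$.

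The algebraic core is a Pl\"ucker-type identity. By Theorem~\ref{integrable} and the remark following it, for each $n$ the real kernel $K^n$ has an integrable representation with \emph{real} functions $A,B$, so that, putting $W(a,b):=A(a)B(b)-B(a)A(b)$ — which equals $(a-b)K^n(a,b)$ for all $a,b\in U$ — the multiplicativity of $2\times2$ determinants yields
\[
W(x,q)W(y,p)-W(x,p)W(y,q)=W(x,y)\,W(q,p)\qquad(x,y\in U).
\]
Consequently, setting
\[
A_n(x):=\frac{(x-p)K^n(x,p)}{\sqrt{K^n(p,p)}},\qquad
B_n(x):=-\,\frac{\sqrt{K^n(p,p)}}{(q-p)K^n(q,p)}\,(x-q)K^n(x,q),
\]
one computes $A_n(x)B_n(y)-B_n(x)A_n(y)=W(x,y)=(x-y)K^n(x,y)$, which is assertion~(1); the conjugations there are vacuous since $A_n,B_n$ are real.

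It remains to prove convergence, and this part is routine. Viewed as functions on $U$ one has $K^n(\,\cdot\,,p)=K^n_p$ and $K^n(\,\cdot\,,q)=K^n_q$, and $K^n_p\to K_p$, $K^n_q\to K_q$ in $L^2(\R,\mu)$ by hypothesis. The inclusion $L^2(\R,\mu)\hookrightarrow X=L^2(\R,M)$ is a contraction, and for $r\in\R$ the operator $f\mapsto(x-r)f$ is bounded from $L^2(\R,\mu)$ into $X$ because $\sup_{x\in\R}(x-r)^2/(1+x^2)<\infty$; hence $(x-p)K^n_p\to(x-p)K_p$ and $(x-q)K^n_q\to(x-q)K_q$ in $X$, and both limits lie in $X$. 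Dividing by the convergent nonzero scalars $\sqrt{K^n(p,p)}\to\sqrt{K(p,p)}$ and $(q-p)K^n(q,p)\to(q-p)K(q,p)$ gives $A_n\to A$ and $B_n\to B$ in $X$ with $A,B\in X$. For the pointwise statement, for each fixed $x\in U$ one has $K^n(x,p)=\langle K^n_p,K^n_x\rangle\to\langle K_p,K_x\rangle$ and likewise $K^n(x,q)\to\langle K_q,K_x\rangle$, so $A_n(x)$ and $B_n(x)$ converge; these pointwise limits agree $M$-almost everywhere with the $X$-limits just found, which may therefore be taken as $A$ and $B$.

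The point I expect to be the main obstacle is conceptual rather than computational: the explicit recovery (\ref{AB}) is not manifestly stable under $n\to\infty$, because there $B$ involves $\overline{\phi_x(p)}=\overline{\langle\phi_x,K_p\rangle}$, and the dependence of $\phi_x$ on the ambient space is controlled only through the norms of the division operators, which are not known to be bounded uniformly in $n$. Trading the single-point recovery for the two-point representation above — legitimate precisely because of the non-uniqueness of $(A,B)$ together with the Pl\"ucker identity — is what circumvents this; the only other point needing care is the existence of the reference pair $p\neq q$ with $\langle K_q,K_p\rangle\neq 0$, addressed in the first step.
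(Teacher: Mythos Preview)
Your argument is correct and lands on exactly the same formulae as the paper's: up to scalar normalisation, both proofs take $A_n(x)=(x-p)K^n(x,p)$ and $B_n(x)=(x-y_*)K^n(x,y_*)$ for a second reference point $y_*$ (your $q$), and the convergence analysis in $X$ and pointwise is then identical. The difference is in how one arrives at this pair. The paper starts from the explicit single-point recovery (\ref{AB}), keeps $A_n$ but subtracts a suitable multiple of $A_n$ from the unstable $\tilde B_n$ so that the resulting $B_n$ vanishes at $y_*$, and then \emph{solves} the integrable representation at $y=y_*$ to obtain $B_n(x)=\dfrac{K^n(p,p)}{A_n(y_*)}(y_*-x)K^n(x,y_*)$. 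You bypass (\ref{AB}) entirely and posit the two-point pair directly, verifying (1) via the Pl\"ucker relation for $2\times 2$ minors; this is cleaner and makes transparent why the instability of $\tilde B_n$ is irrelevant. Both proofs need the same auxiliary fact---the existence of a point $y_*\neq p$ with $\lim_n K^n(y_*,p)\neq 0$---and your justification (pairwise orthogonality of the nonzero $K_w$ forces a contradiction) is at least as complete as the paper's one-line remark; the only case left genuinely to the reader is purely atomic $\mu$, which you flag explicitly.
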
  

\begin{proof}
Fix an arbitrary $ p \in U $. We will omit the subscript $ p $ in formulae (\ref{AB}). Note first that $ A_n $ defined to be the right-hand side of the first equality in (\ref{AB}) with $ K = K^n $, belong to $ X $ for each $ n $ and converge in $  X $. Then $ K^n ( x , y ) = \langle K^n_y , K^n_x \rangle $ converge as well for each $ x, y \in U $, hence $ A_n ( y ) $ converges for each $ y \in U $. Fix an arbitrary $ y_* \in U $ such that $ \lim A_n ( y_* ) \ne 0 $. Such a point $ y_* $ exists since otherwise $ K^n_y \to 0  $ in $  L^2 ( \R , \mu ) $ for each $ y \in U $ which contradicts the assumption. Let $ \tilde B_n $ be the right-hand side of the second equality in (\ref{AB}) with $ K = K^n $. Assume that $ N $  is large enough in such a way that
 $ A_n ( y_* ) \ne 0 $ for $ n > N $. For $ n > N $ define 
\[  B_n = \tilde B_n - \frac{ \tilde B_n ( y_* )}{ A_n ( y_* )} A_n . \]
For all $ x , y \in U $, $ x \ne y $, we then have
\[
K^n (x,y)=\displaystyle \frac1{K^n (p,p)}\displaystyle \frac{A_n (x) B_n(y) - A_n(y) B_n(x)}{x-y} .
\]
In particular, for $ y = y_* $ we obtain,
\[  B_n ( x ) = \frac{K^n ( p , p )}{ A_n ( y_* ) } K^n ( x , y_* ) ( y_* - x ) . \]
The sequence $ K^n ( p , p ) $ has finite limit, and the denominator has non-zero limit by the choice of $ y_* $. It follows that $ B_n \in X $ for all $ n > 0 $, and converges both pointwise and in $ X  $.\end{proof}  

\section{Division properties and analyticity}\label{dca}

From now on we assume that the measure $ \mu $ has no atoms. 
Throughout the rest of the paper expressions of the form $ g / ( x - p ) \in H $ stand for the unique function $ f \in H $ such that $ f ( x ) ( x - p ) = g ( x ) $. We write 
$\mathrm{clos}\,  U $ for the closure of $U$.
\begin{theorem} \label{strdiv-anal}  Let $ H $ be a reproducing kernel subspace in $ L^2 ( \R , \mu ) $ such that for any $ k \in U $ we have the implication   
\begin{equation}\label{str-cond} f \in H \Rightarrow  \frac{ f (x )  - f( k  )}{ x - k } \in H. \end{equation}
 Let $ \mathcal N = \left\{ z \in \C \setminus\partial U \colon ( \cdot - z )^{ -1 } \in H \right\} $. Then $ \mathcal N $ is a subset of $ \C \setminus \mathrm{clos}\,  U $, discrete in $ \C \setminus \partial U $, and any function from $ H $ extends to a meromorphic  function on $ \C \setminus  \partial U  $ with all poles lying in $ \mathcal N $.
\end{theorem}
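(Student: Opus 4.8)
The plan is to realize the division operation as a bounded operator and to read off analyticity from the analytic dependence of its resolvent. Fix $p \in U$ and define, for $f \in H$, the operator $(Df)(x) = \frac{f(x) - f(p)}{x - p}$; by hypothesis \eqref{str-cond} this maps $H$ into $H$, and one checks it is closed, hence bounded by the closed graph theorem. A short computation with the reproducing kernel gives the rank-one style identity $(x - p) \langle f, K_x\rangle$-type relation showing that $Df = xf \ominus (\text{correction})$ in a weak sense; more usefully, for $z \notin \mathrm{spec}(D)$ one has an explicit formula for $(D - z)^{-1} K_p$, and since $f(p) = \langle f, K_p\rangle$ one can express $f(x)$ for $x \in U$ as $f(p) + (x-p)\langle f, (D - \bar{} )^{-1}\cdots\rangle$. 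The upshot I would aim for is: for each $x \in U$, $K_x = K_p + (x - p)(D^* - \bar x)^{-1}(\text{something independent of } x)$, or equivalently $(x - p)^{-1}(K_x - K_p)$ is the value at $x$ of a resolvent applied to a fixed vector. This exhibits $\langle f, K_x\rangle$ as a matrix element of the resolvent of $D$, which is an analytic (operator-valued, hence for fixed vectors scalar-valued) function of $x$ on $\C \setminus \mathrm{spec}(D)$, with poles only at eigenvalues.

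Next I would identify $\mathcal N$ with a subset of the point spectrum of $D$. If $z \in \mathcal N$, so $(\cdot - z)^{-1} \in H$, then a direct calculation shows $(\cdot - z)^{-1}$ is an eigenvector of $D$ (indeed $D\big[(\cdot-z)^{-1}\big] = (z-p)^{-1}\big[(\cdot-z)^{-1} - (\cdot - p)^{-1}\big]$ up to scalars, which after rearrangement is an eigenrelation), so $z$ lies in the spectrum and in fact is an isolated eigenvalue. Conversely each pole of the meromorphic extension of a generic $f \in H$ forces $(\cdot - z)^{-1}$-type behavior, and one argues it lies in $\mathcal N$. Discreteness of $\mathcal N$ in $\C \setminus \partial U$ then follows because, away from $\partial U$, the relevant part of $\mathrm{spec}(D)$ consists of isolated eigenvalues: the essential spectrum of $D$ sits inside $\mathrm{clos}\, U$ (since $D$ is, morally, multiplication by $(x-p)^{-1}$ plus a correction, and multiplication by $x$ has spectrum $\mathrm{supp}\,\mu \subset \mathrm{clos}\, U$), so on $\C \setminus \partial U$ only discrete eigenvalues can occur. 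Finally, to see $\mathcal N \subset \C \setminus \mathrm{clos}\, U$: if $z \in U$ then $(\cdot - z)^{-1} \notin L^2(\R,\mu)$ near $z$ because $\mu$ has no atoms yet $(x-z)^{-2}$ is not $\mu$-integrable in a neighborhood of $z$ — more carefully, I would show membership of $(\cdot-z)^{-1}$ in $H$ for $z \in U$ contradicts the reproducing property at $z$, and for $z \in \partial U \setminus U$ the point is excluded by definition of $\mathcal N$.

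The main obstacle I anticipate is the bookkeeping in step one: getting a clean formula expressing $K_x$ (equivalently $f(x)$) through the resolvent of $D$, including correctly handling the point $x = p$ and verifying the boundedness of $D$ rigorously (the closed graph argument needs that $f_n \to f$ and $Df_n \to g$ in $L^2(\R,\mu)$ forces $g = Df$ pointwise $\mu$-a.e., which uses the reproducing kernel bound $|f_n(x) - f(x)| \le \|K_x\|\,\|f_n - f\|$). A secondary subtlety is relating the \emph{scalar} poles of matrix elements $\langle f, K_x\rangle$ to actual eigenvalues of $D$ — a priori a matrix element can be analytic even where the resolvent has a pole — so I would instead track the vector-valued function $x \mapsto K_x \in H$ directly, show it is meromorphic with poles controlled by $\mathrm{spec}(D) \setminus \mathrm{clos}\, U$, and only then pair against $f$. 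Once the resolvent representation is in hand, the spectral-theoretic statements about $\mathcal N$ and its discreteness are routine.
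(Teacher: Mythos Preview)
Your overall strategy---realize the division map as a bounded operator $D_p$, express point evaluation through its resolvent, and read off meromorphic continuation from resolvent analyticity---is exactly the route the paper takes. The paper's clean formula is $g(p+\lambda^{-1}) = -\lambda\langle (D_p-\lambda)^{-1}g,\,K_p\rangle$, obtained by solving $(D_p-\lambda)f=g$ explicitly and evaluating at $x=p$; this replaces your somewhat tangled attempt to write $K_x$ through the resolvent. Note the change of variable: the spectral parameter is $\lambda$, and the spatial point is $p+\lambda^{-1}$, so statements about $\sigma(D_p)$ live in the $\lambda$-plane, not in $U$ itself. Your eigenvector computation is correct: $(\cdot-z)^{-1}$ is an eigenvector of $D_p$ with eigenvalue $(z-p)^{-1}$.

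There is, however, a genuine gap in your spectral step. You assert that ``on $\C\setminus\partial U$ only discrete eigenvalues can occur'' because $D_p$ is ``morally multiplication by $(x-p)^{-1}$ plus a correction.'' But multiplication by $(x-p)^{-1}$ is not a bounded operator on $H$, so the ``correction'' is not a priori compact or finite-rank, and even if you locate $\sigma_{\mathrm{ess}}(D_p)$ correctly you still need the complement of $\sigma_{\mathrm{ess}}(D_p)$ in $\C$ to be \emph{connected} for Fredholm--index stability to force discreteness of the remaining spectrum. The paper's device is to compare $D_p$ with the \emph{self-adjoint} operator $A=P(x-p)^{-1}P$, where $P$ is the orthogonal projection onto $\{f\in H:f(p)=0\}$; one checks $D_p-A$ has rank at most~$2$. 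Self-adjointness of $A$ gives $\sigma(A)\subset\R$, hence $\C\setminus\sigma_{\mathrm{ess}}(A)$ is connected and contains a neighbourhood of $\infty$; a Weyl-sequence argument then places $\sigma_{\mathrm{ess}}(A)$ inside $\mathrm{clos}\bigl((\mathrm{supp}\,\mu-p)^{-1}\bigr)$, and stability of the Fredholm index under finite-rank perturbation transfers all of this to $D_p$. Without this self-adjoint comparison (or an equivalent argument), the discreteness claim is unjustified.
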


The class of spaces satisfying (\ref{str-cond}) includes all regular de Branges spaces of entire functions. If $ \mathcal H ( E ) $ is a regular de Branges space \cite{deBr} corresponding to a Hermite--Biehler function $ E $, then the condition is satisfied with $ d\mu = |E|^{ -2 } dt $, $ U = \R $. The class is not reduced to regular de Branges spaces, since adding any rational fraction, $ ( t - \lambda )^{ -1 } $, $ \lambda \notin \R $, to a given space $ H $ produces another space satisfying (\ref{str-cond}). 

\begin{lemma}\label{localext}
Let $ \nu $ be a measure on $ \C $ without atoms, let $ U $ be a Borel subset such that $ \nu(\C\setminus U)=0 $. Let $ L $ be a reproducing kernel subspace of $ L^2 ( \C , \nu ) $ of functions on $ U $, such that for any $ f \in L $, $ k \in U $ we have  $$ \frac{ f (x )  - f( k  )}{ x - k } \in L. $$ Then any point $ w \in U $ has a complex neighbourhood, $ V_w $,  such that any function $ f \in L $ admits analytic continuation to $ V_w $.  The analytic continuation is given by the  formula
\begin{equation}\label{ancont} g  ( w + \lambda^{ -1 } ) = - \lambda \left \langle  ( D_w - \lambda )^{ -1 } g , K_w \right \rangle , \end{equation}
where $ D_w \colon L \to L $ is the operator defined by  the formula
\[  D_w \colon f \mapsto \frac{ f (x )  -  f (w )}{ x - w } . \] 
\end{lemma}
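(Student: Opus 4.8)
The plan is to show that the operator $D_w$ is a bounded operator on $L$, so that its resolvent $(D_w - \lambda)^{-1}$ is analytic in $\lambda$ on a disk around $0$, and then to verify that the right-hand side of \eqref{ancont} reproduces the values of $f$ on $U$ near $w$ while being analytic in the continuation variable. First I would observe that the hypothesis gives a well-defined linear map $D_w\colon L\to L$; since $\mu$ (here $\nu$) has no atoms, the function $f$ is determined by its values on $U$ up to a null set, so $D_w f$ is unambiguous, and the identity $(x-w)(D_w f)(x) = f(x) - f(w)$ holds $\nu$-a.e. The next and central step is to prove that $D_w$ is \emph{closed}, hence bounded by the closed graph theorem: if $f_n\to f$ and $D_w f_n \to g$ in $L$, then passing to a subsequence converging $\nu$-a.e.\ and using that pointwise evaluation at $w$ is continuous (reproducing kernel property) gives $(x-w)g(x) = f(x) - f(w)$ a.e., i.e.\ $g = D_w f$.

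Once $D_w$ is bounded, let $r = \|D_w\|$. For $|\lambda| > r$ the resolvent $(D_w - \lambda)^{-1}$ exists and is an operator-norm-analytic function of $\lambda$; equivalently, writing $z = w + \lambda^{-1}$, the expression
\[
g(z) = -\lambda \left\langle (D_w - \lambda)^{-1} g, K_w \right\rangle
\]
is analytic in $z$ for $z$ in the punctured disk $0 < |z - w| < 1/r$, and in fact extends analytically across $z = w$: expanding $(D_w-\lambda)^{-1} = -\lambda^{-1}\sum_{j\ge 0}\lambda^{-j}D_w^j$ gives $g(z) = \sum_{j\ge 0}(z-w)^j\langle D_w^j g, K_w\rangle$, a convergent power series in $z - w$. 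So $V_w$ can be taken to be the disk of radius $1/\|D_w\|$ centered at $w$ (or all of $\C$ if $D_w = 0$).

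It remains to check that this analytic function agrees with $f$ on $U \cap V_w$. For $x \in U$ with $x \ne w$, set $\lambda = (x-w)^{-1}$; I claim $(D_w - \lambda)^{-1} f$ evaluated appropriately recovers $f(x)$. The cleanest route is the algebraic identity: from $(D_w - \lambda)h = f$ with $h \in L$ one gets, using $(t-w)(D_w h)(t) = h(t) - h(w)$, that $h(t) - h(w) - \lambda(t-w)h(t) = (t-w)f(t)$ pointwise a.e.; evaluating the inner product $\langle h, K_w\rangle = h(w)$ and solving the resulting relation at $t = x$ (where $1 - \lambda(x-w) = 0$) yields $h(w) = -\lambda^{-1}f(x)\cdot(\text{sign})$, hence $f(x) = -\lambda\langle h, K_w\rangle = -\lambda\langle (D_w-\lambda)^{-1}f, K_w\rangle$, which is exactly \eqref{ancont} at $z = x$. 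Since both sides are now known to be analytic on $V_w$ and to coincide on the subset $U \cap V_w$, and since $U$ has positive $\nu$-measure near $w$ hence is not discrete, the continuation is unique and the proof concludes.

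I expect the main obstacle to be the rigorous bookkeeping around "$f$ is a function on $U$ defined up to $\nu$-null sets" versus "$f(x)$ is a genuine pointwise value" — the reproducing kernel property pins down a canonical representative via $f(x) = \langle f, K_x\rangle$, and one must consistently use this representative when writing identities like $(x-w)(D_w f)(x) = f(x) - f(w)$ and when evaluating at the specific point $x$ to recover $f(x)$. A secondary point requiring care is verifying that the power series $\sum_j (z-w)^j \langle D_w^j g, K_w\rangle$ genuinely extends the function across $w$ rather than merely on the punctured disk; this is immediate from boundedness of $D_w$ but should be stated. Everything else — boundedness via closed graph, analyticity of the resolvent, the algebraic manipulation identifying the two expressions on $U$ — is routine.
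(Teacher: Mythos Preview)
Your proposal is correct and follows essentially the same approach as the paper: boundedness of $D_w$ via the closed graph theorem, analyticity of the resolvent for large $|\lambda|$, and recovery of $g(w+\lambda^{-1})$ from $\langle (D_w-\lambda)^{-1}g, K_w\rangle$ by the pointwise identity $(t-w)(D_w h)(t)=h(t)-h(w)$. The paper packages the last step slightly differently---it first writes down the explicit resolvent formula $(D_w-\lambda)^{-1}g(x)=\frac{(x-w)g(x)-\lambda^{-1}g(w+\lambda^{-1})}{1-\lambda(x-w)}$ and then evaluates at $x=w$---whereas you evaluate directly at $t=w+\lambda^{-1}$; these are the same computation read in opposite directions.
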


\begin{proof}
Take a point $ w \in U $. The closed graph theorem and the reproducing kernel property imply that $ D_w $ is a bounded operator in $ L $. We now consider the resolvent of the operator $D_w$. Solving the equation
\[ \frac{ f (x )  - f(w)  }{ x - w } - \lambda f ( x ) = g ( x )  \]
for $ f $ and using  the identity $ ( r -s ) D_{r } D_{ s } =  D_r - D_s $ valid for all $ r,s \in U $,   for any $ \lambda \in \rho ( D_w ) $ such that $ w + \lambda^{ -1 } \in U $ we find 
\begin{equation}\label{dkres} \left( ( D_w - \lambda )^{ -1 } g\right)  ( x ) =\frac{  ( x-w )g ( x ) - \lambda^{ - 1 } g ( w  + \lambda^{ -1 })}{ 1 - \lambda ( x-w ) } , \end{equation} 
that is, $$ \left( D_w - \lambda \right)^{ -1 } =  - \lambda^{ -1 } I - \lambda^{-2} D_{ w + \lambda^{ -1 } } .$$

Evaluating (\ref{dkres}) at $ x = w $ we obtain (\ref{ancont}). Since $ D_w $ is a bounded operator, the right-hand side of (\ref{ancont}) is analytic in $ \lambda $ in a neighbourhood of infinity, and tends to $  \langle  g , K_w \rangle = g ( w ) $ as $ \lambda \to \infty $. Thus the right-hand side of (\ref{ancont}) indeed defines the claimed analytic continuation. 
\end{proof}

\medskip
\noindent\textit{Proof of Theorem \ref{strdiv-anal}.}  We use the formula (\ref{ancont}) for further analytic continuation of $ g \in H $, and, to this end, we localize the essential spectrum of $ D_w $. Given a point $ w \in  U $ let $ P $ be the orthogonal projection in $ H $ on the subspace of functions vanishing at $ w $. Then $ A=  P \left( x - w \right)^{ -1 } P $ is a bounded self-adjoint operator in $ H $. We clearly have  $ Af = P D_w f $ whenever $ f \in H $, $ f ( w ) = 0 $, hence $ A $ is a rank 2 perturbation of $ D_w $.  Note that \begin{equation}\label{sigmaa} \sigma_{ess} ( A ) \subset \mathrm{clos}\left({ ( \mathrm{supp } \, \mu - w )^{ -1 } } \right)\end{equation} Indeed, if $ x_0 \in \sigma_{ess} ( A ) $ then there exists an orthonormal sequence, $ e_n $, such that $ ( A - x_0 ) e_n \to 0 $. Without loss of generality, one can assume that $ e_n \in \operatorname{Ran} P $, so that 
\[ P \frac 1{x-w} e_n - x_0 e_n \to 0 . \]
We have
\[ P \frac 1{x-w} e_n = \frac 1{x-w} e_n - ( I_H - P ) D_w e_n . \]
The second term in the right-hand side goes to $0$ in the strong sense as $ n \to \infty $ because $ D_w $ is bounded, $ I_H -P $ is rank $1$, and $ e_n $ goes to zero weakly. Thus 
\[ \frac 1{x-w} e_n - x_0 e_n \to 0 . \] 
It follows that $ x_0 $ belongs to the essential spectrum of the unbounded operator of multiplication by $ 1/ ( x-w ) $ in $ L^2 ( \R , \mu ) $, that is, $ x_0 \in \mathrm{clos}\left({ ( \mathrm{supp } \, \mu - w )^{ -1 } } \right)$, and \eqref{sigmaa} is proved.

By the theorem on preservation of the essential spectrum under relative compact perturbations \cite[Theorem IV.5.35]{Kato} we find that 
\begin{equation}\label{inclus} \sigma_{ess} ( D_w )  \subset \mathrm{clos}\left({( \mathrm{supp }\,  \mu - w )^{ -1 } }\right) . \end{equation}
For completeness, let us sketch the proof of this fact. Since $ D_w $ is a finite rank perturbation of $ A $, the sets $ \{ \lambda \in \C \colon ( A - \lambda ) \textrm{ is Fredholm} \} $ and $ \{ \lambda \in \C \colon  ( D_w  - \lambda ) \textrm{ is Fredholm} \} $ coincide. As $ A $ is a bounded self-adjoint operator, the former set is connected, contains a neighbourhood of infinity, and its complement is $ \sigma_{ ess } ( A ) $. Since $ D_w $ is bounded, $ D_w - \lambda   $ has bounded inverse defined on the whole of $ H $ for $ |\lambda | $ large enough, and therefore for all $ \lambda \notin \sigma_{ess} ( A ) $ save for a discrete subset in $ \C \setminus \sigma_{ ess }( A )  $ whose points are eigenvalues of $ D_w $, by the preservation of the Fredholm index on the connectivity component.  It also follows that the invariant subspaces corresponding to these eigenvalues are finite-dimensional, hence $ \sigma_{ ess} ( D_w ) = \sigma_{ess} ( A ) $. 
  
It follows from (\ref{dkres}) that any point $ \lambda $ satisfying $ w + \lambda^{ -1 } \in U $ belongs to the resolvent set of $ D_w $, that is, setting 
 $\hat\rho_w \equiv \hat{\C}\setminus \sigma ( D_w )$, we have 
 $$ U \subset {\hat\rho}_w^{ -1 } + w. $$ This and (\ref{inclus}) combined imply that the complement of the set $  {\hat\rho}_w^{ -1 } + w $ is contained in the union of $ \partial U $ and a discrete subset, $ M $, of $ \C \setminus \partial U $ which does not intersect $ U $. In particular, the set $  \hat{\rho}_w^{ -1 } + w $ has one connected component, and it follows that any function $ g \in H $ admits meromorphic continuation to the set $ \C \setminus \partial U $ with all poles lying in $ M $. By construction for any $ \lambda \in M $  the number $(  \lambda - w )^{ -1 } $ is an eigenvalue of $ D_w $ with the eigenfunction $ ( \cdot - \lambda )^{ -1 } $. It follows that  $ M = \mathcal N $, and we arrive at the required assertion. \hfill $ \Box $

The problem of description of subspaces satisfying the assumption of Theorem \ref{strdiv-anal} will be considered elsewhere; here we limit ourselves to the following  elementary observation.

\begin{remark} 
The set $ \mathcal N $ in Theorem \ref{strdiv-anal} satisfies the Blaschke condition in the sense that $ \pm \sum_{ \mathcal N \cap \mathbb C_\pm } \Im z / ( 1 + | z |^2 ) < \infty $.
\end{remark}
\begin{proof}
%Since $ H $ is obviously a proper subspace in $ L^2 ( \R , \mu ) $, there exists a nonzero $ h \in   ( \R , \mu ) \ominus H $. Then $ h \prep  ( \cdot - z )^{ -1 } $ for all $ z \in \mathcal N $, that is, 
%\[ \int_\R \frac { h ( s ) d \mu ( s ) }{ s - z } = 0 , z \in \mathcal N . \] 
%This means that $ \mathcal N \cap \mathbb C_\pm $ are zeroes By the proof of h Such a ve 
Indeed, assume that the Blaschke condition is violated in, say, $ \C_- $. A standard argument shows that then the space $ H $ contains the space $ H^2_+ ( \mu ) $, the closure of the rational functions with poles in $ \C_- $ in the norm of $ L^2 ( \R, \mu ) $. There exists a set, $ \Omega \subset \R  $, of $ \mu $-measure zero such that for any  rational function, $ f $, of the form $ ( t - z )^{ -1 } $ where $ z $ ranges over rational points in $ \C_- $, the value $ \langle f , K_w \rangle $ coincides with $ f ( w ) $ for $ w \in \R \setminus \Omega $. Fix a $ y \in \R \setminus \Omega $ and an arbitrary sequence $ z_n \in y+ i \R_- $ of rational points converging to $ y $. Then 
\[  f_n ( t ) = ( y - z_n ) \left( t - z_n  \right)^{ -1 } \left( t + i \right)^{ -1 } \] 
converges to zero in $ L^2 (\R,  \mu ) $ as $ n \to \infty $, hence $ f_n ( y) = \langle f_n , K_y \rangle \to 0 $, a contradiction.  
   \end{proof}   

 \textbf{Example.} Let $ \mu $ be the Lebesgue measure on $ \R $, $ U = \R $. In this case one can describe all the real spaces $ H $ obeying the property (\ref{str-cond}). According to Theorem \ref{strdiv-anal} the elements of the space $ H $ extend to meromorphic functions with poles in a set $ \mathcal N \subset \C \setminus \R $. We will assume for brevity that the eigenspaces of $D_w $ corresponding to the points of $ \mathcal N $ are one-dimensional. Then the space $ H $ can be represented as a linear sum, $ H = \mathrm{clos}({X \dot{+} Y }) $, of the space $$ X = \bigvee_{ \lambda \in \mathcal N } \frac 1{ x - \lambda } $$ and  a space of entire functions $ Y $. The space $ Y $ is a de Branges subspace in $ L^2 ( \R ) $. Condition (\ref{str-cond}) means that $ Y $ is a regular de Branges space, and therefore it coincides with the Paley--Wiener space $ PW_a $ for some $ a > 0 $. Let $ \Theta_\pm  $ be the Blaschke products corresponding to the sets $ \mathcal N \cap \C_\pm $, respectively. By the elementary theory of the shift operator \cite{Nik-easy} the space $ X $ then coincides with $ K_+ \oplus K_- $, $ K_\pm \colon = H^2_\pm \ominus \Theta_\pm H^2_\pm $. Since the inner functions $ e^{ iaz} $ and $ \Theta_\pm $ are relative prime, we find that 
 \begin{equation}\label{H} H = \left( \Theta_+ e^{ ika} H^2_+ \oplus \Theta_- e^{ -ika} H^2_- \right)^\perp . \end{equation}  
We have established that any subspace in $ L^2 ( \R ) $ with the division property (\ref{str-cond}) has the form (\ref{H}). Conversely, for any set $ \mathcal N \subset \C \setminus \R $ satisfying the Blaschke condition in each halfplane $ \C_\pm $ and having no accumulation points at finite distance the space $ H $ defined by (\ref{H}) obeys (\ref{str-cond}). In particular, the sine process corresponds to the case when $ \mathcal N $ is empty, and $ H = PW_a $.

The Hitt--Sarason theorem \cite{Sarason} gives
a complete description of the subspaces $ X $ of the Hardy class $ H^2 $ in the unit disc such that $ f \in X $, $ f(0) = 0 $ implies $ f( z ) /z \in X $ and allows one to generalize the example just considered.
 
\begin{remark}  In the  standard examples of determinantal processes the set $ \mathcal N $ is empty. It would be interesting to give a probabilistic interpretation of processes with non-empty $ \mathcal N $. This question makes sense already in case when $ \mathcal N $ consists of a single point, $ \lambda \in \C_- $, and $ H = PW_a \dot{+} \mathcal L \{ ( t-\lambda )^{ -1 } \}$, so that the corresponding reproducing kernel has the form
 \[ K ( x , y ) =  \frac{ \sin ( x - y )}{ \pi ( x -y )}  + C \frac{ e^{ i ( x -y) } }{  (y - \overline \lambda )( x - \lambda ) } \]
 where  $ C $ is a normalization constant depending on $ \lambda $. 
\end{remark} 

According to the following theorem, the weak division property (\ref{weakdiv}) implies an analyticity property for ratios of functions from $ H $, rather than the functions themselves. To illustrate this point by a simple example,  let $ \rho $ be a positive function on $ \R $, not analytic at any point, and let $ H = \rho PW_a $. Then $ H $ is a subspace in $ L^2 ( \R , \rho^{ -2 } dt ) $ which satisfies (\ref{weakdiv}) but no non-zero element of $H$ admits analytic continuation. 

Unlike the strong division property (\ref{str-cond}), condition (\ref{weakdiv}) allows for the situation when all the functions in $ H $ vanish on a set $ \Omega \subset U $.  Should this happen, one can always consider the measure $ \chi_{ \R \setminus \Omega } d \mu $ and the set $ U \setminus \Omega $ instead of $ \mu $ and $ U $ respectively. Without loss of generality we can therefore assume that the space $ H $ separates zeros, that is, for any $ p \in U $ there exists an $ f_p \in H $ such that $ f_p ( p ) \ne 0 $. 

\begin{theorem}\label{weakdiv-anal} 
Let $ H $ be a reproducing kernel  subspace in $ L^2 ( \R , \mu ) $ that separates zeros 
and such that for any $ k \in U $ we have the implication   
\[ f \in H , \; f ( k ) = 0 \Longrightarrow  \frac{ f }{ x - k } \in H .\]
 Then for any $ p \in U $ the following holds.
\begin{enumerate}
\item The set of accumulation points of zeros of  the function $K_p$  is contained in $\partial U \cup \{ \infty \}$. 
   \item  There exists  an open neighbourhood $ V_p  \subset \C $ of $ p $ such that for any $ f \in H $ the  function $ f/K_p $ extends to an analytic function in $ V_p $.
\item  There exists 
a discrete subset $ \mathcal N_p\subset \R \setminus  \partial U  $, such that for any $ f \in H $ the function $ f/K_p $ extends to a meromorphic function in $ \C \setminus \partial U $  all whose poles are contained in $ \mathcal N_p $.
\end{enumerate}
\end{theorem}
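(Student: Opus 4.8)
The plan is to run the argument of Theorem~\ref{strdiv-anal} with the strong‑division operator $D_w$ replaced by a ``division‑after‑projection'' operator attached to the reproducing kernel $K_p$. Fix $p\in U$. Since $H$ separates zeros, $K_p\not\equiv 0$ and $K_p(p)=\|K_p\|^2>0$. For $f\in H$ the element $f-\frac{f(p)}{K_p(p)}K_p$ vanishes at $p$, so by the weak division property there is a unique $D_pf\in H$ with $(x-p)(D_pf)(x)=f(x)-\frac{f(p)}{K_p(p)}K_p(x)$. As in Lemma~\ref{localext}, the closed graph theorem together with the reproducing property gives that $D_p$ is bounded, and clearly $D_pK_p=0$. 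Solving $(D_p-\lambda)f=g$ yields, by the same computation as in (\ref{dkres}), the identity $f(x)(1-\lambda(x-p))=(x-p)g(x)+\frac{f(p)}{K_p(p)}K_p(x)$; evaluating it at $x=p+\lambda^{-1}$ gives, for every $\lambda\in\rho(D_p)$,
\[ \frac{g}{K_p}\bigl(p+\lambda^{-1}\bigr)=-\frac{\lambda}{K_p(p)}\bigl\langle(D_p-\lambda)^{-1}g,K_p\bigr\rangle. \]
Because $D_p$ is bounded, the right‑hand side is analytic near $\lambda=\infty$ and tends to $g(p)/K_p(p)$ as $\lambda\to\infty$; this proves (2), with $V_p$ the image under $\lambda\mapsto p+\lambda^{-1}$ of a neighbourhood of $\infty$ inside $\rho(D_p)$. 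The same formula, read on all of $\rho(D_p)$, will produce the meromorphic continuation in (3) once $\sigma(D_p)$ is under control.

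Second, I would localize $\sigma_{ess}(D_p)$ exactly as in the proof of Theorem~\ref{strdiv-anal}. Letting $P_p$ be the orthogonal projection in $H$ onto $K_p^{\perp}$, one checks that $\tilde A=P_p(x-p)^{-1}P_p$ is bounded and self‑adjoint, that $f-\frac{f(p)}{K_p(p)}K_p=P_pf$ so $\tilde Af=P_pD_pf$, and that $D_p-\tilde A$ has range in $\C K_p$; hence $D_p$ is a finite‑rank perturbation of $\tilde A$, so $\sigma_{ess}(D_p)=\sigma_{ess}(\tilde A)\subset\mathrm{clos}\bigl((\mathrm{supp}\,\mu-p)^{-1}\bigr)$ by the Weyl‑sequence computation behind (\ref{sigmaa})--(\ref{inclus}). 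On the other hand the identity above shows that any $x_0\in U$ with $K_p(x_0)\ne 0$ has $(x_0-p)^{-1}\in\rho(D_p)$, while any $x_0\in U$ with $K_p(x_0)=0$ produces, by the division property, an eigenfunction $K_p/(\cdot-x_0)$ of $D_p$ with eigenvalue $(x_0-p)^{-1}$. Thus the zeros of $K_p$ in $U$ correspond to nonzero eigenvalues of $D_p$; eigenvalues can accumulate only on $\sigma_{ess}(D_p)$, and combining this with assertion (2) applied at base points close to a prospective accumulation point of zeros shows that accumulation points of the zeros of $K_p$ lie in $\partial U\cup\{\infty\}$, which is (1). The bookkeeping at the end of the proof of Theorem~\ref{strdiv-anal} then shows that the complement of $\rho(D_p)^{-1}+p$ is the union of $\partial U$ and a discrete subset $\mathcal N_p$ of $\C\setminus\partial U$, and the displayed formula extends each $f/K_p$ meromorphically to $\C\setminus\partial U$ with poles confined to $\mathcal N_p$.

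Third — and this is the step where the weak property genuinely differs from the strong one — I would show $\mathcal N_p\subset\R$, that is, that $D_p$ has no nonzero non‑real eigenvalue. For such an eigenvalue $\lambda$ the eigenvalue equation forces $h(p)\ne 0$ and $h=c\,K_p/(\cdot-z_0)$ with $z_0=p+\lambda^{-1}$, $c\ne 0$ (so $z_0$ is real as soon as it lies in $U$, but a priori could be non‑real). Writing $D_p=\tilde A+R$ with $R$ of rank one, we have $\langle\tilde Ah,h\rangle\in\R$ and $\langle Rh,h\rangle=\lambda|h(p)|^2/K_p(p)$, so taking imaginary parts in $\lambda\|h\|^2=\langle D_ph,h\rangle$ gives $\Im(\lambda)\bigl(\|h\|^2-|\langle h,K_p\rangle|^2/\|K_p\|^2\bigr)=0$. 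By the Cauchy--Schwarz inequality the bracket is non‑negative and vanishes only if $h$ is proportional to $K_p$, which is impossible since $D_pK_p=0$ while $\lambda h\ne 0$. Hence $\Im(\lambda)=0$, so $\sigma(D_p)\subset\R$, the continuation formula is analytic for all non‑real $\lambda$, and $\mathcal N_p$ is a discrete subset of $\R\setminus\partial U$, as required.

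The hard part is this last paragraph: reality of $\mathcal N_p$ does not follow from general perturbation theory, since $D_p$ is a non‑self‑adjoint finite‑rank perturbation of a self‑adjoint operator (and such perturbations do produce non‑real spectrum, as the strong‑division case illustrates); it has to be extracted from the Hilbert‑space geometry through the special form $h=c\,K_p/(\cdot-z_0)$ of the eigenfunctions together with Cauchy--Schwarz. Beyond this, the write‑up requires the usual care in passing between $\mu$‑almost‑everywhere and pointwise identities for elements of $H$, and one should check along the way that $K_p$ does not vanish on a set of positive $\mu$‑measure, so that the quotients $f/K_p$ make sense; this last point is also a consequence of (1).
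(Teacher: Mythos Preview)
Your overall strategy coincides with the paper's: introduce the bounded operator $D_p$, derive the resolvent identity and the continuation formula, and compare $D_p$ with the self-adjoint operator $\tilde A=P_p(x-p)^{-1}P_p$ (the paper calls it $T_p$) to localize the essential spectrum. Your argument for $\mathcal N_p\subset\R$ via Cauchy--Schwarz is different from the paper's and arguably more direct: the paper instead establishes $\sigma_p(D_p)=\sigma_p(\tilde A)$ through the intertwining relation $P_pD_p=\tilde AP_p$ (and the reverse inclusion by applying $A$), and then invokes self-adjointness of $\tilde A$. Both routes work; yours avoids the intertwining computation at the price of the quadratic-form estimate.

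There is, however, one genuine gap. Your ``bookkeeping'' step asserts that the continuation formula gives a \emph{meromorphic} extension at each isolated spectral point $\lambda_0=(z_0-p)^{-1}$, but this requires the resolvent of $D_p$ to have a pole rather than an essential singularity at $\lambda_0$, i.e.\ $\lambda_0\notin\sigma_{ess}(D_p)$. When $z_0$ is a zero of $K_p$ lying in the interior of $U$, the inclusion $\sigma_{ess}(D_p)\subset\mathrm{clos}\bigl((\mathrm{supp}\,\mu-p)^{-1}\bigr)$ does \emph{not} exclude $\lambda_0$, since typically $z_0\in\mathrm{supp}\,\mu$; and being isolated in $\sigma(D_p)$ does not by itself rule out $\lambda_0\in\sigma_{ess}(D_p)$ (it could correspond to an isolated infinite-multiplicity eigenvalue of $\tilde A$). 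This is precisely the point where the weak case differs from Theorem~\ref{strdiv-anal}, where all of $U$ maps into the resolvent set and the issue does not arise. The paper closes the gap by a short direct calculation showing that any nonzero eigenvalue of the self-adjoint $\tilde A$ is simple --- the eigenvector must satisfy $e(x)\bigl(1-\lambda_0(x-p)\bigr)=c(x-p)K_p(x)$ and is therefore unique up to scalar --- so $\lambda_0$, once shown to be isolated in $\sigma(\tilde A)$, lies in $\sigma_d(\tilde A)$ and hence outside $\sigma_{ess}(D_p)$. An alternative patch, closer in spirit to your write-up, is to bypass the resolvent at such $z_0$ altogether: apply your assertion (2) at the base point $z_0\in U$ to get $f/K_{z_0}$ and $K_p/K_{z_0}$ analytic on $V_{z_0}$, whence $f/K_p$ is a ratio of analytic functions and therefore meromorphic near $z_0$.
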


\begin{proof} Let $ p \in U $ and set $ \rho ( p ) = 1/K_p ( p ) $. Define the operator, $ D_p \colon H \to H $, by the formula
\[ D_p \colon f \mapsto \frac{ f  -  \rho ( p ) f(p )K_p }{ x - p } . \]
This operator is bounded by the closed graph theorem (we have already used this argument  in the proof of Theorem \ref{strdiv-anal}). Solving 
\[ \frac{ f(x)  - \rho ( p ) f(p )K_p(x) }{ x - p } - \lambda f ( x ) = g ( x )  \]
for $ f $,  we find that for any $ \lambda \in \rho ( D_p ) $ there exists a constant $ C ( p , \lambda ) $ such that
\begin{equation}\label{resolvent} \left( ( D_p - \lambda )^{ -1 } g\right)  ( x ) = \frac{  ( x-p ) g ( x ) + C( p , \lambda ) \rho ( p ) K_p ( x ) }{ 1 - \lambda ( x-p ) }  . \end{equation}
Assume $ p + \lambda^{ -1 } \in U $. Substituting  $ x = p + \lambda^{ -1 } $ we have 
\[ \frac 1\lambda g \left( p + \lambda^{ -1 } \right) + C ( \lambda , p ) \rho ( p ) K_p \left( p + \lambda^{ -1 } \right) = 0 . \]
Note that $ K_p ( p + \lambda^{ -1 } ) \ne 0 $, for  otherwise we would have $ g ( p + \lambda^{ -1 } ) = 0 $ for all $ g \in H $ in contradiction with the assumption that $ H $ separates zeros. Consequently, 
\begin{equation}\label{c} C ( \lambda , p ) = - \lambda^{ - 1 }   K_p ( p )  \frac{g( p  + \lambda^{ -1 })} { K_p \left( p + \lambda^{ -1 } \right) } . \end{equation}
The same calculation shows that if $ K_p ( p + \lambda^{ -1 })  \ne 0 $ then $ \lambda \in \rho ( D_p ) $ and the resolvent is given by the formulae (\ref{resolvent}) and (\ref{c}). Thus a point $ \lambda $ satisfying $ p + \lambda^{ -1 } \in U $ belongs to the resolvent set of  $ D_p $ if and only if $ K_p ( p + \lambda^{ -1 } ) \ne 0 $. Since $ D_p $ is a bounded operator we infer that for a sufficiently small $ \epsilon_p > 0 $ we have  $ K_p ( t) \ne 0 $ for all $ t \in U \cap ( p - \epsilon_p , p + \epsilon_p)$.   

Evaluating (\ref{resolvent}) at $ x = p $ we obtain  that for any $ \lambda $ such that  $ p + \lambda^{ -1 } \in U $ and $ K_p ( p + \lambda^{ -1 } ) \ne 0 $ we have 
\begin{equation}\label{reso-analy} \frac{g( p  + \lambda^{ -1 })} { K_p \left( p + \lambda^{ -1 } \right) } =  - \lambda \rho ( p ) \left \langle  ( D_p - \lambda )^{ -1 } g , K_p \right \rangle  \end{equation}
This formula defines an analytic continuation of the function $ g / K_p $ to an open neighbourhood, $ V_p $, of $ p $ with $ V_p $ independent of $ g $. In particular, the set of zeroes of $ g $ lying on $ U \cap ( p - \epsilon_p , p + \epsilon_p )$ is discrete for $ \epsilon_ p > 0 $ small enough unless $ g \equiv 0 $. Varying $ p $ over $ U $ we obtain that for each $ p \in U $ there exists an interval $ I_p $, $ p \in I_p $, such that $ g $ vanishes on $ U \cap I_p $ at an at most discrete set. It follows that the zeros of $ g $ can only accumulate at $ \partial U \cup \infty $. Applying it to $ g = K_p $ and taking into account that if $ K_p ( p + \lambda^{ -1 })  \ne 0 $ then $ \lambda \in \rho ( D_p ) $, we find that $ \rho ( D_w )^{ -1 } + w $ contains all points in $ U \setminus \Sigma $ for a discrete subset $ \Sigma \subset U $. Arguing as in Theorem \ref{strdiv-anal}, we see that  $ \C \setminus \mathrm{supp} \,\mu $ is contained in $ \rho ( D_w )^{ -1 } + w  $ except possibly  for a subset discrete in $ \C \setminus \mathrm{supp} \,\mu $. It follows now from (\ref{reso-analy}) that the function $ f /K_p $ extends to a function meromorphic in the complement of the set 
$ \Omega = \{ z \in \R \colon K_p (z ) = 0 \}  \cup \partial U $ with poles in a set $ \mathcal N_p $ discrete in $ \C \setminus  \Omega $ and independent of $ f \in H $.  In fact, $ \mathcal N_p $ is a subset of $ \sigma_d ( D_p )^{ -1 } + p $. 

It remains to check that $ \mathcal N_p \subset \R $ and that the points $ z \in \Omega $, $ z \notin \partial U $, are actually either poles, or regular, for $ f / K_p $.  Let  $ Q $ be the orthogonal projection in $ H $ on the subspace of functions vanishing at $ p $, $ A $ be the operator of multiplication by the function $ \left( p - \cdot \right)^{ -1  } $ defined on $ \operatorname{Ran} Q $ by the assumption. With this notation, $ D_p = A Q $. Define a bounded operator, $ T_p $, in $ H $ by $ T_p = Q A Q $. Obviously, $ T_p $ is a rank 1 perturbation of $ D_p $. We claim that $ \sigma_p (T_p) = \sigma_p ( D_p ) $. Indeed the obvious relation $ Q D_p = T_p Q $ implies that $ \sigma_p ( D_p  ) \subset \sigma_p ( T_p ) $.  In the opposite direction, let $ f \in H $ satisfy $ Q A Q f = \lambda f $, $ \lambda \ne 0 $, then $ f \in \operatorname{Ran Q } $ hence $ Q A f = \lambda f $. Applying $ A $ to this equality we find that $ D_p g = \lambda g $ for $ g = Af $. The inclusion $ \mathcal N_p \subset \R $ is now immediate since the operator $ T $ is self-adjoint.

Let $ z $ be an interior point of $ U $ such that $ K_p ( z ) = 0 $, and let $ w $ be defined by $  z = p + w^{ -1 } $. Then $ z $ is not an accumulation point of poles of $ f/K_p $, since the zeroes of $ K_p $ are discrete in $ U $, and therefore a punctured neighbourhood of $ w $ belongs to $ \rho ( D_w ) $. It follows that $ w $ is an isolated point of $ \sigma ( D_p ) $. 
Using again the theorem on preservation of the essential spectrum under relative compact perturbations \cite[Theorem IV.5.35]{Kato}, this time in another direction, we find that
 $ w $ is either an accumulation point for eigenvalues, or an isolated eigenvalue,  for the operator $ T_p  $. The former is ruled out by  the equality $ \sigma_p (T_p) = \sigma_p ( D_p ) $. A straightforward calculation shows that a non-zero eigenvalue of $ T_p $ is simple, and the eigenvalue $ 0 $ has multiplicity at most $ 2 $. It follows that $ w $ belongs to the discrete spectrum of $ T_p $, and therefore, of $ D_p $. This implies that the resolvent of $ D_p $ has a pole at $ w $, from whence the point $ z $ is not an essential singularity of $ f / K_p $, as required.  
%The corresponding eigenfunction has the form  
%\[ \frac { ( x -p ) K_p }{ 1 - \lambda ( x-p ) }, \] and is thus simple, and $ \lambda = 0 $ is of at most multiplicity $ 2 $, and Hence all eigenvalues are simple, the set 
%$ \sigma_d ( W ) \cap ( U-p )^{-1}$ is contained in the set $$ \{ t \in \R \colon K_p ( p + t^{ -1 } ) = 0 \} $$ and therefore does not accumulate at $ w $, a contradiction.  The proof is complete.
\end{proof}

\begin{remark} The setting of Theorem \ref{weakdiv-anal}, in which the ratios of elements of $ H $ are analytic, but the elements themselves are not,  is realized, in particular,   for the Bessel kernel where the functions $f \in H $ are only analytic in the slit plane $ \C \setminus \{ t \colon t \le 0 \} $. 
\end{remark} 

If the set $ U $ is open, then we say  that the reproducing kernel is locally of trace class if for any compact $ \Omega \subset U $ the operator $ \chi_\Omega K \chi_\Omega $ is of  trace class; here $ \chi_\Omega $ stands for the operator of multiplication by the indicator function of the set $ \Omega $, and $ K $ is the integral operator in $ L^2 ( \R , \mu ) $ defined by the kernel $ K ( x , y ) $. 

\begin{corollary}\label{loctraceclass}
Under the  assumptions of  Theorem \ref{weakdiv-anal} the reproducing kernel is  locally of trace class.
\end{corollary}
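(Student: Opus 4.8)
The plan is to deduce the trace-class property from local analyticity of the reproducing kernel together with the standard fact that a nonnegative integral operator with continuous kernel on a compact set is of trace class with trace equal to the integral of the kernel on the diagonal. First I would fix a compact $\Omega\subset U$ and cover it by finitely many of the neighbourhoods produced in Theorem \ref{weakdiv-anal}: for each $p\in\Omega$ choose the interval $I_p$ on which $K_p$ has no zeros and on which, by part (2), every $f/K_p$ (in particular $K_w/K_p$ for $w\in U$) extends analytically. Shrinking these intervals we get a finite subcover $I_{p_1},\dots,I_{p_n}$ of $\Omega$, and it suffices to prove that $\chi_{\Omega_j}K\chi_{\Omega_j}$ is of trace class for $\Omega_j=\Omega\cap \overline{I_{p_j}}$, since $\chi_\Omega K\chi_\Omega$ is dominated in the operator order by $\sum_j \chi_{\Omega_j}K\chi_{\Omega_j}$ (for nonnegative operators a finite partition of the identity on $\Omega$ gives $\chi_\Omega K\chi_\Omega \le \sum_j \chi_{\Omega_j}K\chi_{\Omega_j}$ up to a constant, or more carefully one writes $\chi_\Omega=\sum \psi_j$ for a partition of unity and uses $\psi_j K\psi_k$ together with the Cauchy--Schwarz-type bound $\|\psi_jK\psi_k\|_1\le \|\psi_jK\psi_j\|_1^{1/2}\|\psi_kK\psi_k\|_1^{1/2}$ for the positive operator $K$).

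Next, on a single such piece $\Omega_j$, write $K(x,y)=K_{p_j}(x)\,\overline{K_{p_j}(y)}\cdot G_j(x,y)$ where $G_j(x,y)=K(x,y)/\bigl(K_{p_j}(x)\overline{K_{p_j}(y)}\bigr)$. By Theorem \ref{weakdiv-anal}(2) applied with $f=K_w$, the function $x\mapsto K(x,w)/K_{p_j}(x)=K_w(x)/K_{p_j}(x)$ extends analytically in $x$ near $\Omega_j$; by the symmetry $K(x,w)=\overline{K(w,x)}$ the same holds in $w$, so $G_j$ is continuous (indeed real-analytic, hence bounded) on $\Omega_j\times\Omega_j$. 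Now factor $\chi_{\Omega_j}K\chi_{\Omega_j}=M^*\,\bigl(\chi_{\Omega_j}\widetilde K_j\chi_{\Omega_j}\bigr)\,M$ where $M$ is multiplication by $K_{p_j}$ (a bounded operator on $L^2(\Omega_j,\mu)$, since $K_{p_j}$ is continuous on the compact $\Omega_j$) and $\widetilde K_j$ is the integral operator with the continuous kernel $G_j$. The kernel $G_j$ is itself a reproducing kernel: it is the kernel of the space $K_{p_j}^{-1}H$ restricted to $\Omega_j$ with the measure $|K_{p_j}|^2\,d\mu$, hence nonnegative definite. A nonnegative-definite integral operator with continuous kernel on a compact set (with a finite measure, $\mu$ being locally finite) is of trace class by Mercer's theorem, with trace $\int_{\Omega_j} G_j(x,x)\,d\mu(x)<\infty$. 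Therefore $\chi_{\Omega_j}\widetilde K_j\chi_{\Omega_j}$ is trace class, and since the trace ideal is stable under multiplication by bounded operators, $\chi_{\Omega_j}K\chi_{\Omega_j}$ is trace class. Summing over $j$ completes the proof.

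The main obstacle I anticipate is the bookkeeping around the measure $\mu$ having atoms at the boundary or accumulating, and, more seriously, making the factorization $K=K_{p_j}\overline{K_{p_j}}\,G_j$ legitimate on the nullset where $K_{p_j}$ might vanish within $\Omega_j$: this is exactly why one must first invoke the consequence of Theorem \ref{weakdiv-anal} that $K_p$ is zero-free on a whole interval $I_p\ni p$, so that on each $\Omega_j$ the division by $K_{p_j}$ is harmless. A second delicate point is verifying that $G_j$ really is continuous up to the closed set $\Omega_j$ rather than merely on an open neighbourhood of $p_j$; this follows because the analytic continuation in Theorem \ref{weakdiv-anal}(2) is to an open complex neighbourhood $V_{p_j}$ of $p_j$, and by a compactness/finite-subcover argument applied to $\Omega$ one may arrange each $\Omega_j$ to lie inside such a $V_{p_j}$. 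Finally, one should remark that when $U$ is not open the statement is interpreted for $\Omega$ compact in $U$ and the same argument goes through verbatim, the only role of openness being to guarantee that compact subsets of $U$ admit the required interval covers.
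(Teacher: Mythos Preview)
Your overall strategy is different from the paper's and is a reasonable idea, but there is a genuine gap. The paper proceeds via the integrable representation of Theorem~\ref{integrable}: it shows that $m=A/B$ extends meromorphically to each $V_p$, then proves directly that $\chi_\Omega K$ is Hilbert--Schmidt by splitting the double integral into a piece away from the diagonal (controlled because $A/(\cdot-p)$ and $B/(\cdot-s)$ lie in $L^2(\mu)$) and a near-diagonal piece (controlled because $(m(x)-m(y))/(x-y)$ is locally bounded). Your route through Mercer avoids the integrable form entirely, which is elegant, but the factorization step as you wrote it does not go through.

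The problem is the assertion that $K_{p_j}$ is continuous on the compact $\Omega_j$, so that multiplication by $K_{p_j}$ is a bounded operator on $L^2(\Omega_j,\mu)$. Nothing in the hypotheses or in Theorem~\ref{weakdiv-anal} guarantees this: the paper's own example just before that theorem, $H=\rho\,PW_a\subset L^2(\R,\rho^{-2}\,dt)$ with $\rho$ merely positive and nowhere analytic, gives $K_p(x)=\rho(x)\rho(p)\,\mathrm{sinc}(a(x-p))$, which need not be continuous or even locally bounded. In that case your operator $M$ is unbounded on $L^2(\Omega_j,\mu)$ and the factorization $M^*\widetilde K_j M$ collapses. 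The repair is to change spaces rather than claim boundedness: multiplication by $K_{p_j}$ is a \emph{unitary} from $L^2(\Omega_j,|K_{p_j}|^2\,d\mu)$ onto $L^2(\Omega_j,\mu)$ (here one uses that $K_{p_j}$ is zero-free on $\Omega_j$), and it conjugates $\chi_{\Omega_j}K\chi_{\Omega_j}$ to the integral operator with kernel $G_j$ on $L^2(\Omega_j,|K_{p_j}|^2\,d\mu)$. Since $\int_{\Omega_j}|K_{p_j}|^2\,d\mu\le\|K_{p_j}\|^2<\infty$, this is a finite measure and Mercer applies there. A secondary point you should tighten is the joint continuity of $G_j$: separate real-analyticity does not imply joint continuity. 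One clean way is to iterate the resolvent identity (\ref{reso-analy}) in both variables to obtain $G_j(p_j+\lambda^{-1},p_j+\mu^{-1})=\lambda\overline{\mu}\,\rho(p_j)^2\,\langle (D_{p_j}^*-\overline{\mu})^{-1}K_{p_j},\,(D_{p_j}^*-\overline{\lambda})^{-1}K_{p_j}\rangle$, which is manifestly jointly continuous in $(\lambda,\mu)$ on the resolvent set.
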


\begin{proof}
Let $ A $ and $ B $ be the functions from the representation of the reproducing kernel of Theorem \ref{integrable}. By Theorem \ref{weakdiv-anal}, for each $ p \in U $ there exists a  neighbourhood, $ V_p \subset \C $, of $ p $ such that the ratio $ K_x / K_p $ is analytic in it  for all $ x \in U $, that is the function 
\[ F ( w ) = \frac{ A ( x ) B ( w ) - B ( x ) A ( w) }{ A ( p ) B ( w ) - A ( w ) B ( p ) } \frac{ p-w}{ x-w} \] 
is analytic in $ w $ in $ V_p $. Let 
\[ m ( w )  = \frac{ A ( w )}{ B ( w ) } . \] 
Assume first that $ B ( p ) = 0 $.  Then $ A ( p ) \ne 0 $, for otherwise $ K_p \equiv 0 $, and we infer that the function 
\[  \left( A ( x )  - B ( x ) m ( w )\right) \frac{ p-w}{ x-w} \]
is analytic in $ V_p $. Picking an $ x $ such that $ B ( x ) \ne 0 $ (this is possible for otherwise $ B \equiv 0 $, whence $ K_x \equiv 0 $) we obtain that the function $m $ admits analytic continuation to $ V_p $ except for a possible simple pole at $ p $. If $ B ( p ) \ne 0 $ then the function $ m $ extends to a meromorphic function on $ V_p $ with poles contained in the set 
\[ \left\{ w \colon \frac{ x-w}{ p-w}F ( w ) = \frac{ B ( x ) }{ B ( p ) }  \right\} , \] 
which can only accumulate at the boundary of $ V_p $. Thus the function $ m $ admits a meromorphic continuation to $ V_p $ for every $ p \in U $.

The operator $ K $ is an orthogonal projection in $ L^2 ( \R , \mu ) $, hence $ \chi_\Omega K \chi_\Omega = \chi_\Omega K \cdot K \chi_\Omega $, and the required assertion will be established if we show that $ \chi_\Omega K $ is a Hilbert--Schmidt operator, that is,
\[  \int_{\Omega \times \R } | K ( x , y ) |^2 \mathrm{d} \mu ( x ) \mathrm{d} \mu ( y ) < \infty \] 
for any compact $ \Omega \subset \R $. In doing so, we will assume that $ A $ and $ B $ are chosen in such a way that each of them vanishes at least at one point. This can always be achieved by a linear transformation.

Let us first consider the integral away from the diagonal. 
 For any $ \epsilon > 0 $ we have
\begin{equation}\label{main-ineq}
\displaystyle \int\limits_\Omega  \mathrm{d} \mu ( x )  \displaystyle \int\limits_{ | x - y | > \epsilon } | K ( x , y ) |^2 \mathrm{d} \mu ( y ) \le  C_\epsilon \displaystyle \int\limits_ \Omega A^2 ( x ) \mathrm{d} \mu ( x ) \displaystyle \int\limits_\R B^2 ( y )\frac { \mathrm{d} \mu ( y ) }{ 1+ y ^2  } . 
\end{equation}
Let $p$ and $s$ be such that $A(p) =0$, $B(s)=0$. Both integrals in the right-hand side of (\ref{main-ineq})   are finite because the functions $ \displaystyle \frac{A ( t )} {t-p }  \ \mathrm{and} \    \displaystyle\frac{ B ( t )}{ t-s}   $ are essentially the reproducing kernels at points $ p $ and $ s$ respectively, and thus belong to $ L^2 ( \R ,  \mu ) $. 

It remains to check that for some $ \epsilon > 0 $ the kernel $ K ( x , y ) $ is square-integrable over the set $$ \{ ( x , y ) \colon x \in \Omega , y \in U, | x -y | \le \epsilon \} .$$
Let $ r_p $ be the radius of the neighbourhood $ V_p $, and set $$ I_p = ( p - r_p/4 , p+r_p/4 ) , \  J_p = ( p - r_p/2 , p + r_p/ 2).$$
 Since $ \Omega $ is compact, it suffices to check that $ K $ is square integrable over the set $$ \Pi = ( I_p \cap U ) \times ( J_p \cap U ).$$ 
  Note that 
\begin{equation}\label{diagonal} K ( x , y ) =  B ( x ) B ( y ) \frac{ m ( x ) - m ( y ) }{ x -y } . \end{equation}
Let $ G $ and $ G^\prime $ be the complements of  sufficiently small neighbourhoods of poles and zeros of $ m $ in $ V_p $, respectively. Since $ B \in L^2_{ loc} ( \R , \mu ) $, we have
\begin{equation} \label{intG} \int_{ ( G \times G) \cap \Pi } | K ( x , y )  |^2 \mathrm{d} \mu ( x ) \mathrm{d} \mu ( y )  < \infty . \end{equation}
Rewriting (\ref{diagonal}) in the form 
\[ K ( x , y ) =  A ( x ) A ( y ) \frac{ m^{ -1 } ( y ) - m^{ -1 } ( x ) }{ x -y } \]
by the same argument we find that the integral in (\ref{intG}) is finite with $ G $ replaced by $ G^\prime $. Since $ m $ is meromorphic in $ V_p $, there are at most finitely many zeros and poles of $ m $ in $ J_p$. Taking $ \epsilon $ less than half the distance between the sets of zeros and poles, and choosing  the neighbourhoods small enough, one achieves the inclusion 
$$ \Pi \cap \{(x,y): |x-y|<\varepsilon \}\subset (G \times G) \cup (G^\prime  \times G^\prime),$$
 and the proof is complete.
\end{proof}

This corollary implies in particular that point (2) in Assumption 1 in \cite{Buf-gibbs} can be omitted in the continuous case, that is, it  follows from the other assumptions of the main Theorem 1.4 in that paper.

\end{document}